\theoremstyle{plain}
\newtheorem{thm}{Theorem}[section]
\newtheorem{rmk}[thm]{Remark}
\def\D{\mathrm{D}}
\def\S{\mathscr{S}}
\def\T{\mathscr{T}}
\def\d{\mathrm{d}}
\def\e{\mathrm{e}}
\def\Cset{\mathbb{C}}
\def\Nset{\mathbb{N}}
\def\Qset{\mathbb{Q}}
\def\Rset{\mathbb{R}}
\def\Sset{\mathbb{S}}
\def\Tset{\mathbb{T}}
\def\Zset{\mathbb{Z}}
\def\epsilon{\varepsilon}
\def\epsilon{\varepsilon}
\DeclareMathOperator{\arctanh}{arctanh}
\DeclareMathOperator{\im}{Im}
\begin{document}


\title[A new proof of Poincar\'e's result]%
{A new proof of Poincar\'e's result on 
  the restricted three-body problem}
\thanks{This work was partially supported by the JSPS KAKENHI Grant Number JP17H02859.}

\author{Kazuyuki Yagasaki}

\address{Department of Applied Mathematics and Physics, Graduate School of Informatics,
Kyoto University, Yoshida-Honmachi, Sakyo-ku, Kyoto 606-8501, JAPAN}
\email{yagasaki@amp.i.kyoto-u.ac.jp}

\date{\today}
\subjclass[2020]{70F07, 37J30, 34E10, 34M15, 34M35, 37J40}
\keywords{Restricted three-body problem; nonintegrability;
 perturbation; Morales-Ramis theory}

\begin{abstract}
The problem of nonintegrability of the circular restricted three-body problem
 is very classical and important in dynamical systems.
In the first volume of his masterpieces,
 Henri Poincar\'e showed  the nonexistence of a real-analytic first integral
 which is functionally independent of the Hamiltonian
 and real-analytic in a small parameter representing the mass ratio
 as well as in the state variables,
 in both the planar and spatial cases.
However, his proof was very complicated and unclear.
In this paper,
 we give a new and simple proof of a very similar result for both the planar and spatial cases,
 using an approach which the author developed recently for nearly integrable systems.
\end{abstract}
\maketitle


\section{Introduction}

\begin{figure}[t]
\includegraphics[scale=0.8]{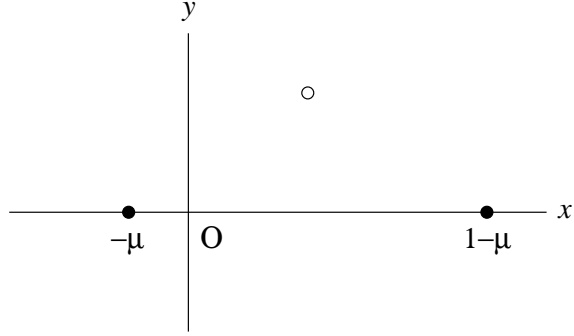}
\caption{Configuration of the circular restricted three-body problem in the rotational frame.
\label{fig:1a}}
\end{figure}

In his famous memoir \cite{P90},
 which was related to a prize competition celebrating the 60th birthday of King Oscar II,
 Henri Poincar\'e studied the planar circular restricted three-body problem,
 which is written in the dimensionless form as the Hamiltonian system
\begin{equation}
\begin{split}
&
\dot{x}=p_x+y,\quad
\dot{p}_x=p_y+\frac{\partial U_2}{\partial x}(x,y),\\
&
\dot{y}=p_y-x,\quad
\dot{p}_y=-p_x+\frac{\partial U_2}{\partial y}(x,y),
\end{split}
\label{eqn:pp}
\end{equation}
where
\[
U_2(x,y)=\frac{\mu}{\sqrt{(x-1+\mu)^2+y^2}}+\frac{1-\mu}{\sqrt{(x+\mu)^2+y^2}},
\]
and discussed the nonexistence of a real-analytic first integral
 which is real-analytic in the state variables and parameter $\mu$ near $\mu=0$
 and functionally independent of its Hamiltonian
\[
H_2(x,y,p_x,p_y)=\tfrac{1}{2}(p_x^2+p_y^2)+(p_xy-p_yx)-U_2(x,y).
\]
He improved his approach significantly
 in the first volume of his masterpieces \cite{P92} published two years later:
 he considered not only the planar case \eqref{eqn:pp} but also the spatial case
\begin{equation}
\begin{split}
&
\dot{x}=p_x+y,\quad
\dot{p}_x=p_y+\frac{\partial U_3}{\partial x}(x,y,z),\\
&
\dot{y}=p_y-x,\quad
\dot{p}_y=-p_x+\frac{\partial U_3}{\partial y}(x,y,z),\\
&
\dot{z}=p_z,\quad
\dot{p}_z=\frac{\partial U_3}{\partial z}(x,y,z),
\end{split}
\label{eqn:sp}
\end{equation}
where
\[
U_3(x,y,z)=\frac{\mu}{\sqrt{(x-1+\mu)^2+y^2+z^2}}+\frac{1-\mu}{\sqrt{(x+\mu)^2+y^2+z^2}},
\]
and showed  the nonexistence of such first integrals.
The system \eqref{eqn:sp} is also Hamiltonian with the Hamiltonian
\[
H_3(x,y,z,p_x,p_y,p_z)=\tfrac{1}{2}(p_x^2+p_y^2+p_z^2)+(p_xy-p_yx)-U_3(x,y,z)
\]
In \eqref{eqn:pp} and \eqref{eqn:sp},
 the two primary bodies with mass $\mu$ and $1-\mu$
 remain at $(1-\mu,0)$ and $(-\mu,0)$, respectively, on the $xy$-plane in the rotational frame,
 and the third massless body is subjected to the gravitational forces from them
 moves under the assumption that the primaries rotate counterclockwise
 on the circles about their common center of mass at the origin
 in the inertial coordinate frame (see Fig.~\ref{fig:1a}).
See, e.g., Section~4.1 of \cite{MO17}
 for more details on the derivation and physical meaning of \eqref{eqn:pp} and \eqref{eqn:sp}.
The results of \cite{P92} were also explained in \cite{AKN06,K83,K96,W37}.
See \cite{B96} for an account of his work from mathematical and historical perspectives.
His result can be stated as follows.

\begin{thm}
\label{thm:P}
The circular restricted three-body problems \eqref{eqn:pp} and \eqref{eqn:sp}
 have no real-analytic first integral which is independent of the Hamiltonian
 and depend real-analytically on $\mu$ near $\mu=0$. 
\end{thm}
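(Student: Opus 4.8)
The plan is to work at the unperturbed limit $\mu=0$, where both \eqref{eqn:pp} and \eqref{eqn:sp} reduce to the Kepler problem with the primary of unit mass fixed at the origin, written in the frame rotating at unit angular velocity; this unperturbed system is Liouville integrable, indeed superintegrable. Suppose, for contradiction, that $F(x,\mu)=\sum_{j\ge0}\mu^jF_j(x)$ is a first integral of the perturbed system, real-analytic in $\mu$ near $0$ and functionally independent of $H=\sum_{j\ge0}\mu^jH_j$. A standard reduction — repeatedly subtracting from $F$ a suitable real-analytic function of $(H,\mu)$ and dividing by a power of $\mu$ — lets us assume that $F$ fails to depend on $H$ already at the lowest order that we work with; to fix ideas I present the argument as if this occurs at zeroth order, so that $F_0$ is a first integral of the unperturbed system that is not everywhere functionally dependent on $H_0$.

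Next I would fix a resonance. For coprime integers $p,q\ge1$ with $p\ne q$, a Keplerian ellipse of the massless body with mean motion $p/q$, chosen so as to avoid the point where the secondary primary sits, is in the rotating frame a periodic orbit $\gamma$ of the unperturbed vector field, of period $2\pi q$, lying on a resonant invariant torus. Along (a complex-time continuation of) $\gamma$ the variational equation of the $\mu=0$ system is solvable — the identity component of its differential Galois group is abelian — because that system is integrable. The point of the author's non-integrability criterion for nearly integrable systems is that the existence of a first integral real-analytic in $\mu$ and independent of $H$ forces a first-order-in-$\mu$ obstruction attached to $\gamma$ to vanish: a subharmonic-Melnikov-type integral of $H_1$ along $\gamma$ must be identically constant, equivalently the identity component of the differential Galois group of an associated first-order variational equation along $\gamma$ must be abelian. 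Crucially, this is a statement about that variational equation, not about $F_0$ and the Fourier spectrum of $H_1$, and so it is insensitive to the degeneracy of the Kepler frequency map — precisely the feature of the problem that made Poincar\'e's own argument so involved.

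I would then evaluate this obstruction and show that it does not vanish. In polar coordinates $(r,\phi)$ the first-order perturbation of the Hamiltonian is $H_1=-U_1$ with
\[
U_1(r,\phi)=\frac{1}{\sqrt{1-2r\cos\phi+r^2}}-\frac1r-\frac{\cos\phi}{r^2},
\]
whose Legendre expansion ($\sum_{k\ge0}r^kP_k(\cos\phi)$ for $r<1$, and $\sum_{k\ge2}r^{-k-1}P_k(\cos\phi)$ for $r>1$) exhibits all angular harmonics with coefficients that are nonzero functions of $r$. Restricting $U_1$ to the ellipse $\gamma$ and expanding in the mean anomaly yields a Fourier series in $t$ whose coefficients are finite combinations of classical Hansen and Laplace coefficients; one checks that these do not all vanish at the resonant frequency, so that for a suitable resonance $p{:}q$ the obstruction above is nonzero, the relevant Galois group is non-abelian, and we reach a contradiction. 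This settles the planar case \eqref{eqn:pp}. For the spatial case \eqref{eqn:sp} one repeats the argument along the same orbit $\gamma$, which lies in the invariant set $\{z=p_z=0\}$ carrying \eqref{eqn:pp}: in the six-dimensional phase space the unperturbed variational equation along $\gamma$ splits into the planar part just treated and a normal part $\ddot\zeta+r(t)^{-3}\zeta=0$ in the $(z,p_z)$ directions, which is solvable because Kepler is integrable, while the $\mu$-term perturbs it through $\partial^2U_1/\partial z^2|_{z=0}$ (here $U_1$ denotes the spatial disturbing function), a nontrivial function of $t$ along $\gamma$; the same computation again yields a non-abelian Galois group and the desired contradiction.

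The step I expect to be the main obstacle is the penultimate one: proving that the first-order obstruction genuinely fails to vanish. Because the Kepler frequency map is degenerate, the obstruction does vanish for many of the most natural resonances, so one cannot argue with a single orbit but must let the resonance $p{:}q$ and the orbit $\gamma$ vary and show that the obstruction cannot vanish for all of them; this ultimately reduces to a non-vanishing statement for certain integrals of Legendre functions, which I would establish from known properties of Hansen and Laplace coefficients. A secondary technical point is the singularity of $U_1$ at the position of the secondary primary: one must keep $\gamma$ inside the region where $U_1$ is regular and check that the Melnikov integrals converge and the differential Galois computations remain valid there. Finally, the reduction in the first paragraph — isolating the order at which $F$ ceases to depend on $H$, and, for the spatial problem, relating functional independence on $\{z=p_z=0\}$ to functional independence for the planar problem — must be carried out with care.
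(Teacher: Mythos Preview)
First, note that the paper does \emph{not} prove Theorem~\ref{thm:P} at all: it is stated as Poincar\'e's classical result, and the paper instead proves the closely related but distinct Theorems~\ref{thm:pp} and~\ref{thm:sp} (meromorphic nonintegrability in Delaunay variables on the complex manifolds $\S_2$, $\S_3$). So your proposal should really be compared with those proofs.

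Your outline and the paper's proofs share the same skeleton --- pass to Delaunay-type action--angle variables, fix a resonant Kepler ellipse with mean motion $k_1/k_2$, and verify the hypothesis of the criterion in Theorem~\ref{thm:tool} --- but they diverge completely at the one step you yourself flag as the obstacle: showing that the obstruction is nonzero. You propose to expand the disturbing function in Legendre polynomials and reduce to non-vanishing of Hansen/Laplace coefficients; this is essentially Poincar\'e's original route, the very computation the paper describes as ``long and complicated'' and sets out to avoid. The paper's replacement idea is to bypass Fourier analysis entirely: the integral $\mathscr{I}(\theta)$ in assumption~(A2) is a \emph{complex contour} integral, not a real-period (Melnikov) average, and the paper evaluates it by locating the singularity of the Kepler motion in complex time at $t=k_2(\pi+iK)/k_1$ (where $\e^{-i\varphi}$ blows up), taking $\gamma_\theta$ to be a small circle of radius $\delta$ about that point, and reading off the leading $\delta^{-3/2}$ term directly. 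This gives an explicit nonzero expression for $\mathscr{I}_1(\theta)$ at \emph{every} resonance and every phase $\theta$, without touching a single Fourier coefficient of the disturbing function. Your description of the criterion (``a subharmonic-Melnikov-type integral of $H_1$ along $\gamma$ must be identically constant'') misses this: (A2) is a residue-type condition in the complex $t$-plane, and that is precisely why the non-vanishing is easy here.

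Two smaller points. For the spatial problem you propose to extract the obstruction from the normal block $\ddot\zeta+r(t)^{-3}\zeta=0$ perturbed by $\partial_z^2 U_1|_{z=0}$; the paper does something simpler and safer: it restricts to planar orbits ($I_2=I_3$, $\psi=\pi/2$, $p_\psi=0$), observes that $h_1$ then coincides verbatim with the planar expression \eqref{eqn:ph1} (with $\theta_3$ in place of $\theta_2$), and reuses the planar singularity computation unchanged. And your proposed reduction in the first paragraph --- subtract a function of $(H,\mu)$ and divide by $\mu$ until $F_0$ is independent of $H_0$ --- is part of Poincar\'e's machinery for the real-analytic statement, but it plays no role in the paper's argument: Theorem~\ref{thm:tool} delivers meromorphic nonintegrability directly once (A1) and (A2) are checked, with no order-by-order bookkeeping on a hypothetical $F$.
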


When $\mu=0$, Eqs.~\eqref{eqn:pp} and \eqref{eqn:sp} are integrable
 and can be transformed to simple Hamiltonian systems
 in action-angle coordinates (see Eqs.~\eqref{eqn:naasys}, \eqref{eqn:ppHD} and \eqref{eqn:spHD})
 by the Delaunay elements (see Eqs.~\eqref{eqn:ppDe} and \eqref{eqn:spDe}).
However, in the coordinates,
 the perturbation terms of $O(\mu)$ are very complicated.
Actually, he devoted the whole of Chapter~6, 
 the length of which is 55 pages in the English translated version
 and 66 pages in the French original one,
 in \cite{P92} to discussions on the perturbation terms.
The long and complicated computations are  very difficult to understand.
Here we prove a result similar to Theorem~\ref{thm:P}
 using a different approach which does not rely on the complicated perturbation terms.
The planar and spatial cases are discussed in Sections~3 and 4, respectively.
See Theorems~\ref{thm:pp} and \ref{thm:sp} below for the precise statements especially.
Our results are more general than Theorem~\ref{thm:P} in some meaning
 since it says that the planar and spatial problems \eqref{eqn:pp} and \eqref{eqn:sp}
 are not only analytically but also meromorphically nonintegarble.
Moreover, they are proven to be nonintegrable even in some resonant planar elliptic orbits.
See also Remarks~\ref{rmk:3a} and \ref{rmk:4a}.
However, in another meaning,
 Theorem~\ref{thm:P} is more general than ours
 since it says that they are real-analytically nonintegrable
 and guarantees no additional first integral in the spatial problem \eqref{eqn:sp}.

Our basic tool to obtain the result is a technique developed in \cite{Y21a}
 for determining  whether systems of the form
\begin{equation}
\dot{I}=\epsilon h(I,\theta;\epsilon),\quad
\dot{\theta}=\omega(I)+\epsilon g(I,\theta;\epsilon),\quad
(I,\theta)\in\Rset^\ell\times\Tset^m,
\label{eqn:aasys}
\end{equation}
are not meromorphically integrable in the Bogoyavlenskij sense \cite{B98},
 which is an extended concept of integrability to non-Hamiltonian systems,
 such that the first integrals and commutative vector fields,
 the existence of which are required for their integrability,
 depend meromorphically on $\epsilon$ near $\epsilon=0$,
 like the result of Poincar\'e \cite{P90,P92} stated above with $\epsilon=\mu$,
 where $\ell,m\in\Nset$, $\epsilon$ is a small parameter such that $0<|\epsilon|\ll 1$,
 and $\omega:\Rset^\ell\to\Rset^m$,
 $h:\Rset^\ell\times\Tset^m\times\Rset\to\Rset^\ell$
 and $g:\Rset^\ell\times\Tset^m\times\Rset\to\Rset^m$ are meromorphic in their arguments.
The technique is based on generalized versions due to Ayoul and Zung \cite{AZ10}
 of the Morales-Ramis theory \cite{M99,MR01}
 and its extension, the Morales-Ramis-Sim\'o theory \cite{MRS07},
 and briefly reviewed in a necessary context in Section~2 for the reader's convenience.
The system~\eqref{eqn:aasys} is Hamiltonian if $\ell=m$ as well as $\epsilon=0$ or
\[
\D_I h(I,\theta;\epsilon)\equiv-\D_\theta g(I,\theta;\epsilon),
\]
and non-Hamiltonian if not.
If a system is integrable in the Bogoyavlenskij sense
 and has $\ell$ functionally independent first integrals
 whose level set has a connected compact set,
 then it is transformed to \eqref{eqn:aasys} with $\epsilon=0$
 on the connected compact set \cite{B98,Z18}.
Moreover, in \cite{Y21a},
 it was proven by using the technique that
 the planar and spatial problems \eqref{eqn:pp} and \eqref{eqn:sp}
 are meromorphically nonintegrable near $(x,y)=(-\mu,0),(1-\mu,0)$
 and $(x,y,z)=(-\mu,0,0),(1-\mu,0,0)$, respectively, for any $\mu\in(0,1)$.
The results of \cite{Y21a} immediately yield statements similar to ours
 but a region of the phase space in which the nonexistence of additional first integrals is proven
 is much narrower than ours.
See Theorem~\ref{thm:pp} and \ref{thm:sp} for more details.

In closing this section we give some comments
 on other recent or relatively recent remarkable progress in related topics.
First, the nonintegrability of the general three-body problem is now well understood.
Tsygvintsev \cite{T00,T01a} proved the nonintegrability
 of the general planar three-body problem
 near the Lagrangian parabolic orbits in which the three bodies form an equilateral triangle
 and move along certain parabolas, using the Ziglin method \cite{Z82}.
Boucher and Weil \cite{BW03} also obtained a similar result,
 using the Morales-Ramis theory \cite{M99,MR01}
 while the case of equal masses was proven a little earlier in \cite{B00}.
Moreover, Tsygvintsev \cite{T01b,T03,T07} proved the nonexistence
 of a single additional first integral near the Lagrangian parabolic orbits when
\[
\frac{m_1m_2+m_2m_3+m_3m_1}{(m_1+m_2+m_3)^2}\notin\{\tfrac{7}{48},\tfrac{5}{16}\},
\]
where $m_j$ represents the mass of the $j$th body for $j=1,2,3$.
Subsequently, Morales-Ruiz and Simon \cite{MS09}
 succeeded in removing the three exceptional cases and extended the result
 to the space of three or more dimensions.
Ziglin \cite{Z00} also proved the nonintegrability of the general three-body problem
 near a collinear solution
 in the space of any dimension
 when two of the three masses, say $m_1,m_2$, are nearly equal
 but neither $m_3/m_1$ nor $m_3/m_2\in\{11/12,1/4,1/24\}$.
It should be noted that
 Ziglin \cite{Z00} and Morales-Ruiz and Simon \cite{MS09}
 also discussed the general $N$-body problem.

Secondly, Guardia et al. \cite{GMS16} considered the planar problem \eqref{eqn:pp}
 and showed the occurrence of transverse intersection
 between the stable and unstable manifolds of the infinity for any $\mu\in(0,1)$
 in a region far from the primaries
 in which $r=\sqrt{x^2+y^2}$ and its conjugate momentum are sufficiently large.
This implies, e.g., by Theorem~3.10 of \cite{M73}, 
 the real-analytic nonintetgrabilty of \eqref{eqn:pp}
 as well as the existence of oscillatory motions
 such that $\limsup_{t\to\infty}r(t)=\infty$ while $\liminf_{t\to\infty}r(t)<\infty$.
Similar results were obtained much earlier
 when $\mu>0$ is sufficiently small in \cite{LS80}
 or for any $\mu\in(0,1)$ except for a certain finite number of the values in \cite{X92}.

\section{Basic Tool}
Our proofs of the main results are based on the technique developed in \cite{Y21a}
 for proving the nonintegrability of nearly integrable dynamical systems,
 as stated in Section~1.
In this section we restrict ourselves to Hamiltonian systems,
 and review the basic tool in the context.

\begin{figure}
\includegraphics[scale=0.8]{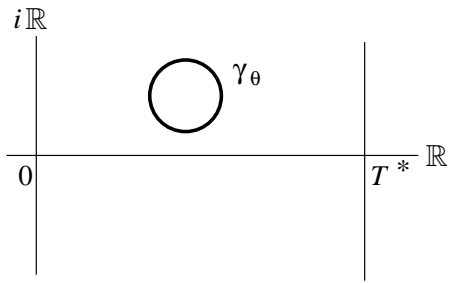}
\caption{Assumption~(A2).\label{fig:2a}}
\end{figure}

Consider $n$-degree-of-freedom Hamiltonian systems of the form 
\begin{equation}
\dot{I}=-\epsilon\D_\theta\tilde{H}(I,\theta;\epsilon),\quad
\dot{\theta}=\D H_0(I)+\epsilon\D_I\tilde{H}(I,\theta;\epsilon),\quad
(I,\theta)\in\Rset^n\times\Tset^n,
\label{eqn:naasys}
\end{equation}
where $n\ge 2$ is an integer, $\epsilon$ is a small parameter such that $0<|\epsilon|\ll 1$,
 and $H_0:\Rset^n\to\Rset$ and $\tilde{H}:\Rset^n\times\Tset^n\times\Rset\to\Rset$
 are meromorphic  in the arguments.
The Hamiltonian for \eqref{eqn:naasys} is given by $H_0(I)+\epsilon\tilde{H}(I,\theta)$.
We extend the domain of the independent variable $t$
 to a domain including $\Rset$ in $\Cset$ and do so for the dependent variables.
When $\epsilon=0$, Eq.~\eqref{eqn:naasys} becomes
\begin{equation}
\dot{I}=0,\quad
\dot{\theta}=\omega(I),
\label{eqn:naasys0}
\end{equation}
where $\omega(I)=\D H_0(I)$.
We assume the following on the unperturbed system \eqref{eqn:naasys0}:
\begin{enumerate}
\setlength{\leftskip}{-1em}
\item[\bf(A1)]
For some $I^\ast\in\Rset^n$, a resonance of multiplicity $n-1$,
\[
\dim_\Qset\langle\omega_1(I^\ast),\ldots,\omega_n(I^\ast)\rangle=1,
\]
occurs with $\omega(I^\ast)\neq 0$,
 i.e., there exists a constant $\omega^\ast>0$ such that 
\[
\frac{\omega(I^\ast)}{\omega^\ast}\in\Zset^n\setminus\{0\}.
\]
where $\omega_j(I)$ is the $j$th element of $\omega(I)$ for $j=1,\ldots,n$.
\end{enumerate}
Note that we can replace $\omega^\ast$ with $\omega^\ast/k$ for any $k\in\Nset$.
We refer
 to the $n$-dimensional torus $\T^\ast=\{(I^\ast,\theta)\mid\theta\in\Tset^n\}$ as the \emph{resonant torus}
 and to periodic orbits $(I,\theta)=(I^\ast,\omega(I^\ast)t+\theta_0)$, $\theta_0\in\Tset^n$,
 on $\T^\ast$ as the \emph{resonant periodic orbits}.
Let $T^\ast=2\pi/\omega^\ast$.
We also make the following assumption.
\begin{enumerate}
\setlength{\leftskip}{-1em}
\item[\bf(A2)]
For some $\theta\in\Tset^n$ there exists a closed loop $\gamma_\theta$
 in a domain including $(0,T^\ast)\subset\Rset$ in $\Cset$
 such that $\gamma_\theta\cap(i\Rset\cup(T^\ast+i\Rset))=\emptyset$ and
\begin{equation}
\mathscr{I}(\theta)=-\D\omega(I^\ast)\int_{\gamma_\theta}
 \D_\theta\tilde{H}(I^\ast,\omega(I^\ast)\tau+\theta;0)\d\tau
\label{eqn:A2}
\end{equation}
is not zero.
See Fig.~\ref{fig:2a}
\end{enumerate}
Note that the condition $\gamma_\theta\cap(i\Rset\cup(T^\ast+i\Rset))=\emptyset$
 is not essential in (A2), since it always holds
 by replacing $\omega^\ast$ with $\omega^\ast/k$ for sufficiently large $k\in\Nset$ if necessary.
We can prove the following theorem
 which guarantees that conditions~(A1) and (A2) is sufficient for nonintegrability of \eqref{eqn:naasys}
 in a certain meaning.

\begin{thm}
\label{thm:tool}
Let $\Gamma$ be any domain in $\Cset/T^\ast\Zset$
 containing $\Rset/T^\ast\Zset$ and $\gamma_\theta$.
Suppose that assumption~{\rm(A1)} and {\rm(A2)} hold for some $\theta=\theta_0\in\Tset^n$.
Then the system \eqref{eqn:aasys} is not meromorphically integrable 
 near the resonant periodic orbit $(I,\theta)=(I^\ast,\omega(I^\ast)\tau+\theta_0)$ with $\tau\in\Gamma$
 such that the first integrals 
 also depend meromorphically on $\epsilon$ near $\epsilon=0$,
 when the domains of the independent and dependent variables are extended to regions
 in $\Cset$ and $\Cset^n\times(\Cset/2\pi\Zset)^n$, repsectively.
Moreover, if {\rm(A2)} holds for any $\theta\in\Delta$, where $\Delta$ is a dense set in $\Tset^n$,
 then the conclusion holds for any resonant periodic orbit on the resonant torus $\T^\ast$.
\end{thm}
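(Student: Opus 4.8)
The plan is to argue by contradiction, extracting a nonabelian differential Galois group from assumption~(A2) and contradicting the generalized Morales-Ramis theory of Ayoul and Zung \cite{AZ10}. Suppose that \eqref{eqn:naasys} is meromorphically integrable in the Bogoyavlenskij sense near the resonant periodic orbit $\gamma_{\theta_0}=\{(I^\ast,\omega(I^\ast)\tau+\theta_0)\mid\tau\in\Gamma\}$, with the first integrals and commutative vector fields required for integrability depending meromorphically on $\epsilon$ near $\epsilon=0$. Following \cite{Y21a} I would first adjoin $\epsilon$ as a dependent variable with $\dot\epsilon=0$; the resulting autonomous system on a complex neighbourhood of $\Rset^n\times\Tset^n\times\Rset$ is still Bogoyavlenskij integrable (with $\epsilon$ as one additional first integral and the commutative vector fields extended by $0$ in the $\epsilon$-direction, so that the counts match), and $\gamma_{\theta_0}\times\{0\}$ is a complexified periodic orbit of it since $\dot I=0$, $\dot\epsilon=0$ while $\omega(I^\ast)\neq0$ there. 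By the cotangent-lift construction of \cite{AZ10}, which reduces Bogoyavlenskij integrability to meromorphic Hamiltonian integrability of the lift, the identity component $G^0$ of the differential Galois group of the variational equation (VE) of the enlarged system along $\gamma_{\theta_0}\times\{0\}$ must then be abelian.

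Next I would compute and analyse this VE. Linearising \eqref{eqn:naasys} together with $\dot\epsilon=0$ at $\epsilon=0$, $I=I^\ast$, $\theta=\phi(\tau):=\omega(I^\ast)\tau+\theta_0$ gives the strictly block-triangular system
\[
\dot{\delta\epsilon}=0,\quad
\dot{\delta I}=-\D_\theta\tilde H(I^\ast,\phi(\tau);0)\,\delta\epsilon,\quad
\dot{\delta\theta}=\D\omega(I^\ast)\,\delta I+\D_I\tilde H(I^\ast,\phi(\tau);0)\,\delta\epsilon,
\]
whose coefficients are meromorphic in $\tau$ on $\Gamma\subset\Cset/T^\ast\Zset$. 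Its solutions arise from iterated quadratures: $\delta\epsilon$ is constant, $\delta I=\delta I(0)-\delta\epsilon\,F(\tau)$ with $F(\tau)=\int_0^\tau\D_\theta\tilde H(I^\ast,\phi(s);0)\,\d s$, and $\delta\theta$ in turn involves $\tau$ (a primitive of $1$) and a double primitive $\Phi$ of $F$. Hence a fundamental matrix of the VE is unipotent over the Picard--Vessiot field, so $G$ is unipotent, therefore connected, $G=G^0$, and it only remains to show that $G$ is \emph{not} abelian.

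The crux is then to exhibit two noncommuting monodromy automorphisms in $G$ (both loops below lie in $\Gamma$ by hypothesis). Analytic continuation of the solutions around the loop $\gamma_{\theta_0}$ of (A2) sends $F\mapsto F+p$ with $p=\oint_{\gamma_{\theta_0}}\D_\theta\tilde H(I^\ast,\phi(s);0)\,\d s$ and leaves $\tau$ fixed, since by the side condition in (A2) that loop does not wind around the cylinder; call this automorphism $\sigma_1$. Continuation once around $\Rset/T^\ast\Zset$ sends $\tau\mapsto\tau+T^\ast$ (and shifts $F$ by its period over that loop); call it $\sigma_2$. Using $\Phi'=F$ and $F'=\D_\theta\tilde H(I^\ast,\phi(\cdot);0)$ and tracking how $F$, $\tau$ and $\Phi$ transform, one computes that $\sigma_1\sigma_2\sigma_1^{-1}\sigma_2^{-1}$ fixes $F$ and $\tau$ but shifts $\Phi$ by $T^\ast p$; since $\Phi$ enters the solutions only through the term $-\delta\epsilon\,\D\omega(I^\ast)\Phi$ in $\delta\theta$, this commutator acts on the solution space by shifting $\delta\theta$ by a nonzero constant multiple of $\delta\epsilon\,\D\omega(I^\ast)p=-\delta\epsilon\,\mathscr{I}(\theta_0)$, which is nonzero for $\delta\epsilon\neq0$ by (A1) and (A2). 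Thus $[\sigma_1,\sigma_2]\neq\id$, $G=G^0$ is nonabelian, contradicting the first paragraph; this proves the first assertion. For the ``moreover'' part: if (A2) holds for every $\theta$ in a dense set $\Delta$, then every neighbourhood of an arbitrary resonant periodic orbit on $\T^\ast$ contains a resonant periodic orbit with phase in $\Delta$, near which nonintegrability has just been proven, so nonintegrability holds near every resonant periodic orbit on $\T^\ast$.

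I expect the main difficulty to lie in the commutator bookkeeping and the verifications around it. The shift of $\Phi$ enters the solutions only via $\D\omega(I^\ast)\Phi$, so $\sigma_1$ and $\sigma_2$ would actually commute as operators on the solution space if one only knew $p\neq0$; it is precisely the factor $\D\omega(I^\ast)$ built into $\mathscr{I}(\theta_0)$---measuring transversality of $p$ to the level set of $H_0$---that makes $\mathscr{I}(\theta_0)\neq0$ the right hypothesis, and this must be reconciled with the conserved quantity of the VE coming from the Hamiltonian. One must also check that the enlarged system genuinely inherits Bogoyavlenskij integrability with the correct count of first integrals and commutative vector fields, that $\gamma_{\theta_0}\times\{0\}$ meets the hypotheses of the machinery of \cite{AZ10}, and that all of this survives along the complexified orbit traced by $\tau\in\Gamma$, on which $\tilde H$, and hence the VE, may have singularities.
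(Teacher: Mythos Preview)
The paper does not actually prove this theorem; it simply cites Section~2 of \cite{Y21a}, and your sketch follows precisely the strategy advertised there and in the introduction---adjoin $\epsilon$ with $\dot\epsilon=0$, apply the Ayoul--Zung version of Morales--Ramis to the first variational equation of the enlarged system along the resonant orbit at $\epsilon=0$, and exhibit nonabelian monodromy via the commutator of the loop $\gamma_{\theta_0}$ and the period loop, which shifts the $\delta\theta$-component by $-T^\ast\mathscr{I}(\theta_0)\,\delta\epsilon$. One small slip: $\Phi$ with $\Phi'=F$ is a single primitive of $F$ (equivalently a double primitive of $\D_\theta\tilde H$), not a ``double primitive of $F$''; otherwise your bookkeeping and your observation that it is $\D\omega(I^\ast)p=-\mathscr{I}(\theta_0)$ rather than $p$ itself that must be nonzero are on the mark.
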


See Section~2 of \cite{Y21a} for a proof.
A more general result for non-Hamiltonian systems was obtained there.

\section{Planar Case}
We take $\mu$ as the small parameter $\epsilon$
 and discuss the planar case \eqref{eqn:pp}.
Expanding \eqref{eqn:pp} in a Taylor series about $\mu=0$, we obtain
\begin{equation}
\begin{split}
&
\dot{x}=p_x+y,\quad
\dot{y}=p_y-x,\\
&
\dot{p}_x=p_y-\frac{x}{(x^2+y^2)^{3/2}}+\mu\left(\frac{(x-1)(x^2+y^2)+3x^2}{(x^2+y^2)^{5/2}}
 -\frac{x-1}{(x-1)^2+y^2)^{3/2}}\right),\\
&
\dot{p}_y=-p_x-\frac{y}{(x^2+y^2)^{3/2}}+\mu\left(\frac{y(x^2+y^2)+3xy}{(x^2+y^2)^{5/2}}
-\frac{y}{((x-1)^2+y^2)^{3/2}}\right),
\end{split}
\label{eqn:pped}
\end{equation}
up to $O(\mu)$.
Equation~\eqref{eqn:pped} is a Hamiltonian system with the Hamiltonian
\begin{align}
H=&
\tfrac{1}{2}(p_x^2+p_y^2)-\frac{1}{\sqrt{x^2+y^2}}+y p_x-xp_y\notag\\
&
+\mu\left(\frac{x^2+y^2+x}{(x^2+y^2)^{3/2}}-\frac{1}{\sqrt{(x-1)^2+y^2}}\right).
\label{eqn:H2}
\end{align}

We next rewrite the Hamiltonian \eqref{eqn:H2} in the polar coordinates. 
Let
\[
x=r\cos\phi,\quad
y=r\sin\phi.
\]
The momenta $(p_r,p_\phi)$ corresponding to $(r,\phi)$ satisfy
\[
p_x=p_r\cos\phi-\frac{p_\phi}{r}\sin\phi,\quad
p_y=p_r\sin\phi+\frac{p_\phi}{r}\cos\phi.
\]
See, e.g., Section~8.6.1 of \cite{MO17}.
The Hamiltonian becomes
\begin{equation*}
H=\tfrac{1}{2}\left(p_r^2+\frac{p_\phi^2}{r^2}\right)-\frac{1}{r}-p_\phi
 +\mu\left(\frac{r+\cos\phi}{r^2}-\frac{1}{\sqrt{r^2+1-2r\cos\phi}}\right).
\end{equation*}
When $\mu=0$, the corresponding Hamiltonian system is written as
\begin{equation}
\dot{r}=p_r,\quad
\dot{p}_r=\frac{p_\phi^2}{r^3}-\frac{1}{r^2},\quad
\dot{\phi}=\frac{p_\phi}{r^2}-1,\quad
\dot{p}_\phi=0,
\label{eqn:pp0p}
\end{equation}
which is easily solved since $p_\phi$ is a constant.
Letting $\varphi=\phi+t$, we have the relation
\begin{equation}
r=\frac{p_\phi^2}{1+e\cos\varphi},
\label{eqn:ppr}
\end{equation}
where the position $\varphi=0$ is appropriately chosen and $e$ is a constant.
We choose $e\in(0,1)$, so that Eq.~\eqref{eqn:ppr}
 represents an elliptic orbit with the eccentricity $e$.
Moreover, its period is given by
\begin{equation}
T=p_\phi^3\int_0^{2\pi}\frac{\d\varphi}{(1+e\cos\varphi)^2}
 =\frac{2\pi p_\phi^3}{(1-e^2)^{3/2}}.
\label{eqn:T}
\end{equation}

Finally, we introduce  the Delaunay elements $(I_1,I_2,\theta_1,\theta_2)$
 obtained from the generating function
\begin{equation}
W(r,\phi,I_1,I_2)=I_2\phi+\chi(r,I_1,I_2),
\label{eqn:W}
\end{equation}
where
\begin{align}
\chi(r,I_1,I_2)
=&\int_{r_0}^r\left(\frac{2}{\rho}-\frac{1}{I_1^2}
 -\frac{I_2^2}{\rho^2}\right)^{1/2}\d\rho\notag\\
=&
 -2I_1^2\arcsin\sqrt{\frac{r_+-r}{r_+-r_-}}+\sqrt{(r_+-r)(r-r_-)}\notag\\
& +2I_1I_2\arctan\sqrt{\frac{r_-(r_+-r)}{r_+(r-r_-)}}
\label{eqn:chi}
\end{align}
with
\[
r_\pm=I_1\left(I_1\pm\sqrt{I_1^2-I_2^2}\right)
\]
(see, e.g., Section~8.9.1 of \cite{MO17}).
Here the upper and lower signs are taken when $p_r$ is positive and negative, respectively.
We have
\begin{equation}
\begin{split}
&
p_r=\frac{\partial W}{\partial r}
=\frac{\partial\chi}{\partial r}(r,I_1,I_2),\quad
p_\phi=\frac{\partial W}{\partial\phi}
=I_2,\\
&
\theta_1=\frac{\partial W}{\partial I_1}=\chi_1(r,I_1,I_2),\quad
\theta_2=\frac{\partial W}{\partial I_2}=\phi+\chi_2(r,I_1,I_2),
\end{split}
\label{eqn:ppDe}
\end{equation}
where
\begin{align*}
\chi_1(r,I_1,I_2)=\frac{\partial\chi}{\partial I_1}(r,I_1,I_2),\quad
\chi_2(r,I_1,I_2)=\frac{\partial\chi}{\partial I_2}(r,I_1,I_2).
\end{align*}
Since the transformation from $(r,\phi,p_r,p_\phi)$ to $(\theta_1,\theta_2,I_1,I_2)$ is symplectic,
 the transformed system is also Hamiltonian and its Hamiltonian is given by
\begin{align}
H=&-\frac{1}{2I_1^2}-I_2
 +\mu\biggl(\frac{1}{R(\theta_1,I_1,I_2)}
 +\frac{\cos(\theta_2-\chi_2(R(\theta_1,I_1,I_2),I_1,I_2))}{R(\theta_1,I_1,I_2)^2}\notag\\
&
 -\frac{1}{\sqrt{R(\theta_1,I_1,I_2)^2
-2R(\theta_1,I_1,I_2)\cos(\theta_2-\chi_2(R(\theta_1,I_1,I_2),I_1,I_2))+1}} \biggr),
\label{eqn:ppHD}
\end{align}
where $r=R(\theta_1,I_1,I_2)$ is the $r$-component of the symplectic transformation satisfying
\begin{equation}
\theta_1=\chi_1(R(\theta_1,I_1,I_2),I_1,I_2).
\label{eqn:R}
\end{equation}
The associated Hamiltonian system is written as
\begin{equation}
\begin{split}
&
\dot{I}_1=\mu h_1(I_1,I_2,\theta_1,\theta_2),\quad
\dot{I}_2=O(\mu),\\ 
&
\dot{\theta}_1=\frac{1}{I_1^3}+O(\mu),\quad
\dot{\theta}_2=-1+O(\mu),
\end{split}
\label{eqn:ppe}
\end{equation}
where
\begin{align*}
&
h_1(I_1,I_2,\theta_1,\theta_2)\\
&
=\biggl(\frac{1}{R(\theta_1,I_1,I_2)^2}
 +\frac{2\cos(\theta_2-\chi_2(R(\theta_1,I_1,I_2),I_1,I_2))}{R(\theta_1,I_1,I_2)^3}\\
&\quad
 +\frac{I_2\sin(\theta_2-\chi_2(R(\theta_1,I_1,I_2),I_1,I_2))}{R(\theta_1,I_1,I_2)^2}
 \frac{\partial\chi_2}{\partial r}(R(\theta_1,I_1,I_2),I_1,I_2))\\
&\quad
-\bigl(R(\theta_1,I_1,I_2)^2
-2R(\theta_1,I_1,I_2)\cos(\theta_2-\chi_2(R(\theta_1,I_1,I_2),I_1,I_2))+1\bigr)^{-3/2}\\
&\quad
\times\biggl(R(\theta_1,I_1,I_2)\biggl(1+I_2\sin(\theta_2-\chi_2(R(\theta_1,I_1,I_2),I_1,I_2))\\
&\quad
\times\frac{\partial\chi_2}{\partial r}(R(\theta_1,I_1,I_2),I_1,I_2))\biggr)
-\cos(\theta_2-\chi_2(R(\theta_1,I_1,I_2),I_1,I_2))\biggr)\biggr)\\
&\quad
\times\frac{\partial R}{\partial\theta_1}(\theta_1,I_1,I_2).
\end{align*}

To state our main theorem for the planar case \eqref{eqn:pp},
 we introduce  the new variables $(v_1,v_2,v_3)\in\Cset\times(\Cset/2\pi\Zset)^2$ given by
\begin{align*}
V_1(v_1,r,I_1,I_2)
 :=&v_1^2+r^2-2I_1^2r+I_1^2I_2^2=0,\\
V_2(v_2,r,I_1,I_2)
 :=&I_1^2(I_1^2-I_2^2)(2\sin^2v_2-1)^2-(r-I_1^2)^2=0,\\
V_3(v_3,r,I_1,I_2)
 :=& I_1^2(r-I_2^2)^2(\tan^2v_3+1)^2-r^2(I_1^2-I_2^2)^2(\tan^2v_3-1)^2=0,
\end{align*}
so that the generating function \eqref{eqn:W} is regarded as an analytic one
 on the four-dimensional complex manifold
\begin{align*}
\bar{\S}_2=\{(r,\phi,I_1,I_2,v_1,v_2,v_3)
&\in\Cset\times(\Cset/2\pi\Zset)\times\Cset^3\times(\Cset/2\pi\Zset)^2\\
& \mid V_j(v_j,r,I_1,I_2)=0,\ j=1,2,3\}
\end{align*}
since Eq.~\eqref{eqn:chi} is represented by
\[
\chi(I_1,I_2,v_1,v_2,v_3)=v_1-2I_1^2v_2+2I_1I_2v_3.
\]
Hence, we can regard \eqref{eqn:ppe}
 as a meromorphic two-degree-of-freedom Hamiltonian systems
 on the four-dimensional complex manifold
\begin{align*}
\S_2=&\{(I_1,I_2,\theta_1,\theta_2,r,v_1,v_2,v_3)
\in\Cset^2\times(\Cset/2\pi\Zset)^2\times\Cset^2\times(\Cset/2\pi\Zset)^2\\
& \quad
 \mid\theta_1-\chi_1(r,I_1,I_2)=V_j(v_j,r,I_1,I_2)=0,\ j=1,2,3\}.
\end{align*}
Actually, we have
\begin{align*}
\frac{\partial V_j}{\partial v_j}\frac{\partial v_j}{\partial r}
 +\frac{\partial V_j}{\partial r}=0,\quad
\frac{\partial V_j}{\partial v_j}\frac{\partial v_j}{\partial I_l}
 +\frac{\partial V_j}{\partial I_l}=0,\quad
 j=1,2,3,\
 l=1,2,
\end{align*}
to express
\begin{align*}
\frac{\partial\chi}{\partial r}
 =\sum_{j=1}^3\frac{\partial\chi}{\partial v_j}\frac{\partial V_j}{\partial r},\quad
\chi_l=\frac{\partial\chi}{\partial I_l}
 +\sum_{j=1}^3\frac{\partial\chi}{\partial v_j}\frac{\partial V_j}{\partial I_l},\quad
 l=1,2
\end{align*}
as meromorophic functions of $(r,I_1,I_2,v_1,v_2,v_3)$ on $\bar{\S}_2$.
Similar treatments were used for Hamiltonian systems with algebraic potentials originally in \cite{C13}
 and for the restricted three-body problem with $\mu\in(0,1)$ fixed in \cite{Y21a}.
Let $\Sigma(\S_2)$ be the critical set of $\S_2$
 on which the projection $\pi_2:\S_2\to\Cset^2\times(\Cset/2\pi\Zset)^2$ given by
\[
\pi_2(I_1,I_2,\theta_1,\theta_2,r,v_1,v_2,v_3)=(I_1,I_2,\theta_1,\theta_2)
\]
is singular.

\begin{thm}
\label{thm:pp}
The Hamiltonian system \eqref{eqn:ppe} with the Hamiltonian \eqref{eqn:ppHD}
 does not have a complete set of first integrals in involution
 that are functionally independent almost everywhere
 and meromorphic in $(I_1,I_2,\theta_1,\theta_2,v_1,v_2,v_3,\mu)$
in a neighborhood of the unperturbed orbit $\{I_1,I_2=\text{const.}\}$
 with $I_2\in(0,I_1)$ or $(I_1,0)$ and $I_1^3\in\Qset\setminus\{0\}$
 on $\S_2\setminus\Sigma(\S_2)$ near $\mu=0$.
\end{thm}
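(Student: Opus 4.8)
The plan is to recognize \eqref{eqn:ppe}--\eqref{eqn:ppHD}, regarded as a meromorphic system on $\S_2\setminus\Sigma(\S_2)$, as an instance of \eqref{eqn:naasys} with $\epsilon=\mu$, $n=2$, $H_0(I_1,I_2)=-1/(2I_1^2)-I_2$, $\omega(I)=\D H_0(I)=(1/I_1^3,-1)$ and $\tilde{H}$ the $O(\mu)$ part of \eqref{eqn:ppHD}, and then to apply Theorem~\ref{thm:tool}. On $\S_2\setminus\Sigma(\S_2)$ the projection $\pi_2$ is a local biholomorphism, so $R$, $\chi_1$, $\chi_2$ and their $r$-derivatives are genuine meromorphic functions of $(I_1,I_2,\theta_1,\theta_2,v_1,v_2,v_3)$ and \eqref{eqn:ppe} is a genuine meromorphic two-degree-of-freedom Hamiltonian system there, exactly as in \cite{C13,Y21a}; since a complete set of functionally independent involutive meromorphic first integrals would make the system meromorphically integrable in the Bogoyavlenskij sense with first integrals depending meromorphically on $\mu$, it suffices to exhibit $I^\ast=(I_1^\ast,I_2^\ast)$ and $\theta_0\in\Tset^2$ for which (A1) and (A2) hold, the latter on a dense set of $\theta$. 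Assumption (A1) is immediate: for $I_2^\ast\in(0,I_1^\ast)$ or $(I_1^\ast,0)$ the unperturbed orbit $\{I_1,I_2=\text{const.}\}$ is the Keplerian ellipse \eqref{eqn:ppr} of eccentricity $e=\sqrt{1-(I_2^\ast/I_1^\ast)^2}\in(0,1)$, hence $0<r_-<r_+$, and it is a periodic orbit of \eqref{eqn:naasys0} precisely when $\omega_1(I^\ast)=1/(I_1^\ast)^3\in\Qset\setminus\{0\}$, which is the hypothesis; writing $1/(I_1^\ast)^3=p/q$ in lowest terms with $p\in\Zset\setminus\{0\}$, $q\in\Nset$ and taking $\omega^\ast=1/q$ gives $\omega(I^\ast)/\omega^\ast=(p,-q)\in\Zset^2\setminus\{0\}$ and $T^\ast=2\pi q$.

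The substance is (A2). Since $\omega_2\equiv-1$ is constant, $\D\omega(I^\ast)=\mathrm{diag}(-3/(I_1^\ast)^4,0)$, so $\mathscr{I}(\theta)=\bigl((3/(I_1^\ast)^4)\oint_{\gamma_\theta}\partial_{\theta_1}\tilde{H}(I^\ast,\omega(I^\ast)\tau+\theta;0)\,\d\tau,\ 0\bigr)$ and (A2) amounts to $\oint_{\gamma_\theta}\partial_{\theta_1}\tilde{H}\,\d\tau\neq0$ for a suitable loop $\gamma_\theta$. I would first simplify this: along the unperturbed orbit $\d\tilde{H}/\d\tau=\omega_1\partial_{\theta_1}\tilde{H}+\omega_2\partial_{\theta_2}\tilde{H}$ and $\oint_{\gamma_\theta}(\d\tilde{H}/\d\tau)\,\d\tau=0$ (as $\tilde{H}$ is single-valued along $\gamma_\theta$, see below), so $\oint_{\gamma_\theta}\partial_{\theta_1}\tilde{H}\,\d\tau=-(\omega_2/\omega_1)\oint_{\gamma_\theta}\partial_{\theta_2}\tilde{H}\,\d\tau$ and it is enough to study $\partial_{\theta_2}\tilde{H}=-\sin\psi/R^2+R\sin\psi\,(R^2-2R\cos\psi+1)^{-3/2}$, where $\psi=\theta_2-\chi_2(R,I_1^\ast,I_2^\ast)$ and $R=R(\theta_1,I_1^\ast,I_2^\ast)$; this avoids the implicit derivative $\partial R/\partial\theta_1$ appearing in $h_1$.

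For $\gamma_\theta$ I would take a loop encircling twice the complex time $\tau_\ast$ at which the unperturbed Keplerian solution meets $r=0$ (a collision with the primary at the origin, already present in the $\mu=0$ flow). From \eqref{eqn:chi}, $\chi$ has a logarithmic singularity at $r=0$, with $\partial\chi/\partial r\sim iI_2/r$; hence $\chi_2=\partial\chi/\partial I_2\sim i\log R$ has a logarithm there while $\chi_1=\partial\chi/\partial I_1$ is regular at $r=0$ with a simple critical point ($\partial_r\chi_1$ vanishing to first order, $\partial_r\chi_1\sim -iR/((I_1^\ast)^3I_2^\ast)$), so, via $\theta_1=\chi_1(R)$, $R$ has a square-root branch point at $\tau_\ast$, $R=\sqrt{2iI_2^\ast}\,(\tau-\tau_\ast)^{1/2}+\cdots$. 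This $\tau_\ast$ lies off $\Rset$ but, after a suitable integer translate in $\Cset/T^\ast\Zset$ and after replacing $\omega^\ast$ by $\omega^\ast/k$ if needed so that $\gamma_\theta$ misses $i\Rset\cup(T^\ast+i\Rset)$, inside the strip $0<\re\tau<T^\ast$. Near $R=0$ the logarithm in $\chi_2$ exponentiates away, $e^{\pm i\psi}$ behaving like $R^{\mp1}$, so (generically in $\theta$) $\tilde{H}$ is a Laurent series in $s:=(\tau-\tau_\ast)^{1/2}$ starting at $s^{-3}$, its monodromy about $\tau_\ast$ has order two, $\gamma_\theta$ closes up, and $\sin\psi,\cos\psi\sim(\mathrm{const})/R$. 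Feeding the Puiseux expansions of $R$, $\chi_2(R)$ and $\sin\psi$ into $\partial_{\theta_2}\tilde{H}$ and using $\d\tau=2s\,\d s$ gives $\oint_{\gamma_\theta}\partial_{\theta_2}\tilde{H}\,\d\tau=4\pi i\,[\partial_{\theta_2}\tilde{H}]_{s^{-2}}$; since the second summand of $\partial_{\theta_2}\tilde{H}$ is (generically) bounded at $\tau_\ast$, one has $[\partial_{\theta_2}\tilde{H}]_{s^{-2}}=[-\sin\psi/R^2]_{s^{-2}}$, and only the $e^{i\chi_2(R)}R^{-2}$ piece contributes, giving $[\partial_{\theta_2}\tilde{H}]_{s^{-2}}=(\text{a constant depending only on }I^\ast)\cdot e^{-i\theta_2(\tau_\ast)}$, where $\theta_2(\tau_\ast)$ is the value of $\theta_2$ carried along the orbit to $\tau_\ast$. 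I would then show that this $I^\ast$-constant is nonzero; as $e^{-i\theta_2(\tau_\ast)}$ never vanishes, (A2) holds for every $\theta$, in particular on a dense $\Delta\subset\Tset^2$.

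The main obstacle is that last step: one must carry the expansions of $R(\theta_1,I_1,I_2)$ (through $\theta_1=\chi_1(R)$, hence of $\chi$ and its $r$- and $I$-derivatives at $r=0$ on $\bar{\S}_2$), of $\chi_2(R)$ and of $\sin\psi$ far enough to reach the $s^{-2}$ coefficient of $\partial_{\theta_2}\tilde{H}$ and to check that the several leading Laurent coefficients do not conspire to cancel there — which is exactly why it pays to reduce first, via the exact differential $\d\tilde{H}$, to the simpler $\partial_{\theta_2}\tilde{H}$ rather than to work with $\partial_{\theta_1}\tilde{H}$ and its implicit $\partial R/\partial\theta_1$; a further simplification is available by passing to the eccentric-anomaly variable, in which the Keplerian solution is entire and the branch point becomes an ordinary expansion point. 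Granting (A1) and (A2), Theorem~\ref{thm:tool} shows that \eqref{eqn:ppe} is not meromorphically integrable in the Bogoyavlenskij sense with first integrals depending meromorphically on $\mu$ — hence, being Hamiltonian, admits no complete set of functionally independent involutive meromorphic first integrals — near every resonant periodic orbit on $\T^\ast=\{I=I^\ast\}$, and in particular near the unperturbed orbit $\{I_1,I_2=\text{const.}\}$ with $I_2^\ast\in(0,I_1^\ast)$ or $(I_1^\ast,0)$ and $(I_1^\ast)^3\in\Qset\setminus\{0\}$, on $\S_2\setminus\Sigma(\S_2)$ and for $\mu$ near $0$, which is the claimed statement.
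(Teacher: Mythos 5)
Your reduction to Theorem~\ref{thm:tool} and your verification of (A1) coincide with the paper's: you identify \eqref{eqn:ppe} with \eqref{eqn:naasys}, take the resonant Kepler ellipses $I_2^\ast=I_1^\ast\sqrt{1-e^2}$ with $1/I_1^{\ast 3}=k_1/k_2$, and you even locate the right complex singularity --- the paper's loop is centered at the complex time $t=k_2(\pi+iK)/k_1$ at which $\im\varphi\to\infty$, i.e.\ $1+e\cos\varphi\to\infty$ and $r\to 0$, which is exactly your collision time $\tau_\ast$, with the same square-root behaviour $R\sim\mathrm{const}\cdot(\tau-\tau_\ast)^{1/2}$. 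The problem is that you stop precisely where the paper's proof actually lives. Condition (A2) is a non-vanishing statement, and you explicitly defer its verification (``I would then show that this $I^\ast$-constant is nonzero''; ``the main obstacle is that last step''). The paper's proof consists almost entirely of that computation: the explicit inversion \eqref{eqn:varphi} of Kepler's equation, the asymptotics of $\e^{-i\varphi}$ and of the two dominant terms of the integrand near $t=k_2(\pi+iK)/k_1$, and the resulting explicit nonzero coefficient $-\tfrac{9(1-i)k_1^2(1-e^2)^{1/4}}{4ek_2^2\delta^{3/2}}\,\e^{-(k_2K/k_1+i(\theta_2-k_2\pi/k_1))}$. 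Without some version of this computation the proof is incomplete at its crux.

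Moreover, your reformulation changes \emph{which} coefficient must be shown nonzero, in a way that prevents you from simply importing the paper's estimate. Your exact-differential trick ($\oint\d\tilde H=0$, hence $\omega_1\oint\partial_{\theta_1}\tilde H\,\d\tau=-\omega_2\oint\partial_{\theta_2}\tilde H\,\d\tau$) is valid only on a loop along which $\tilde H$ restricted to the complexified orbit is single-valued, i.e.\ on your doubled loop, a single circuit in $s=(\tau-\tau_\ast)^{1/2}$. Over that doubled loop every half-integer power $(\tau-\tau_\ast)^{-5/2},(\tau-\tau_\ast)^{-3/2}$ integrates to zero, and only the genuine residue, your $[\,\cdot\,]_{s^{-2}}$ coefficient, survives. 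But the paper's nonzero answer comes precisely from the $(\tau-\tau_\ast)^{-5/2}$ leading terms over a \emph{single} circuit (hence its $\delta^{-3/2}$ dependence on the radius); those contributions cancel identically in your doubled-loop integral. So the quantity you need to be nonzero is a subleading Laurent coefficient that the paper never computes, and whose non-vanishing requires carrying the Puiseux expansions of $R$, $\chi_2(R)$ and $\sin\psi$ one order beyond the leading terms and checking that no cancellation occurs. Conversely, if you intend a single circuit so as to capture the dominant $\delta^{-3/2}$ term as the paper does, then your reduction to $\partial_{\theta_2}\tilde H$ breaks down, because $\tilde H$ does not return to its value after one circuit. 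Either way, the decisive non-degeneracy computation is missing and the monodromy bookkeeping for $\gamma_\theta$ must be pinned down before Theorem~\ref{thm:tool} can be invoked.
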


\begin{proof}
We only have to show that
 the hypotheses of Theorem~\ref{thm:tool} hold for \eqref{eqn:ppe} as in Theorem~2 of \cite{C13},
 since the corresponding Hamiltonian system has the same expression as \eqref{eqn:ppe}
 on $\S_2\setminus\Sigma(\S_2)$.

We first estimate $h_1(I,\theta)$ for the unperturbed solutions to \eqref{eqn:ppe}.
When $\mu=0$, we see that $I_1,I_2$ are constants
 and can write $\theta_1=\omega_1 t+\theta_{10}$ and $\theta_2=-t+\theta_{20}$
 for any solution to \eqref{eqn:ppe}, where
\begin{equation}
\omega_1=\frac{1}{I_1^3}
\label{eqn:omega1}
\end{equation}
and $\theta_{10},\theta_{20}\in\Sset^1$ are constants.
Since $r=R(\omega_1 t+\theta_{10},I_1,I_2)$ and
\[
\phi=-t+\chi_2(R(\omega_1 t+\theta_{10},I_1,I_2),I_1,I_2),
\]
respectively, become the $r$- and $\phi$-components of a solution to \eqref{eqn:pp0p},
 we have
\begin{equation}
\begin{split}
&
R(\omega_1 t+\theta_{10},I_1,I_2)
 =\frac{I_2^2}{1+e\cos(\phi(t)+t+\bar{\phi}(\theta_{10}))},\\
&
\chi_2(R(\omega_1 t+\theta_{10},I_1,I_2),I_1,I_2)
 =\phi(t)+t+\bar{\phi}(\theta_{10})
\end{split}
\label{eqn:pprchi}
\end{equation}
by \eqref{eqn:ppr},
 where $\phi(t)$ is the $\phi$-component of a solution to \eqref{eqn:pp0p}
  and $\bar{\phi}(\theta_{10})$ is a constant depending on $\theta_{10}$.
Differentiating both equations in \eqref{eqn:pprchi} with respect to $t$ yields
\begin{equation}
\begin{split}
&
\omega_1\frac{\partial R}{\partial\theta_1}(\omega_1t+\theta_{10},I_1,I_2)
=\frac{e I_2^2\sin(\phi(t)+t+\bar{\phi}(\theta_{10}))(\dot{\phi}(t)+1)}
 {(1+e\cos(\phi(t)+t+\bar{\phi}(\theta_{10})))^2},\\
&
\omega_1 I_2\frac{\partial\chi_2}{\partial r}
(R(\omega_1 t+\theta_{10},I_1,I_2),I_1,I_2)
\frac{\partial R}{\partial\theta_1}(\omega_1t+\theta_{10},I_1,I_2)
=\dot{\phi}(t)+1.
\end{split}
\label{eqn:pprchi2}
\end{equation}

Fix the value of $I_1=I_1^\ast$ such that $\omega_1=k_1/k_2$ for $k_1,k_2>0$ relatively prime integers,
 i.e., $I_1^{\ast 3}=k_2/k_1\in\Qset$.
The following arguments can be modified to apply when $\omega_1<0$.
Assumption~(A1) holds with $\omega^\ast=1/k_2$.
By the second equation of \eqref{eqn:pprchi} $\phi(t)$ is $2\pi k_2/k_1$-periodic.
From \eqref{eqn:T} and \eqref{eqn:omega1} we have
\begin{equation}
\frac{2\pi I_2^3}{(1-e^2)^{3/2}}=\frac{2\pi k_2}{k_1},\quad\mbox{i.e.,}\quad
I_2=I_1^\ast\sqrt{1-e^2}=:I_2^\ast
\label{eqn:con}
\end{equation}
and
\[
\D\omega(I^\ast)
=\begin{pmatrix}
-3/I_1^{\ast 4} & 0\\
0 & 0
\end{pmatrix}
=\begin{pmatrix}
-3(k_1/k_2)^{4/3} & 0\\
0 & 0
\end{pmatrix}.
\]
Using \eqref{eqn:pprchi} and \eqref{eqn:pprchi2}, we obtain
\begin{align}
&
h_1(I_1^\ast,I_2^\ast,t+\theta_1,-t+\theta_2)\notag\\
&
=\frac{k_2}{k_1}\dot{\varphi}(t)\biggl(\frac{e}{I_2^{\ast 2}}\sin\varphi(t)
 +\frac{2e}{I_2^{\ast 4}}(1+e\cos\varphi(t))\sin\varphi(t)\cos(\phi(t)+\theta_2)\notag\\
&\quad
+\frac{1}{I_2^{\ast 4}}(1+e\cos\varphi(t))^2\sin(\phi(t)+\theta_2)\notag\\
&\quad
 -\frac{1}{\hat{r}(t;\theta_2)^3}\biggl(e I_2^{\ast 4}\sin\varphi(t)
 +I_2^{\ast 2}(1+e\cos\varphi(t))^2\sin(\phi(t)+\theta_2)\notag\\
&\quad
 -e I_2^{\ast 2}(1+e\cos\varphi(t))\sin\varphi(t)\cos(\phi(t)+\theta_2)\biggr)
\label{eqn:ph1}
\end{align}
after an appropriate shift of the time variable $t$, where $\varphi(t)=\phi(t)+t$ and
\begin{equation}
\hat{r}(t;\theta_2)=\sqrt{(1+e\cos\varphi(t))^2
 -2I_2^{\ast 2}(1+e\cos\varphi(t))\cos(\phi(t)+\theta_2)+I_2^{\ast 4}}.
\label{eqn:hatr}
\end{equation}
Since by \eqref{eqn:pp0p} and \eqref{eqn:ppr}
\begin{equation}
\frac{\dot{\varphi}(t)}{(1+e\cos\varphi(t))^2}=\frac{1}{I_2^{\ast 3}}
=\frac{k_1}{k_2(1-e^2)^{3/2}},
\label{eqn:ppdphi}
\end{equation}
we write the first component of \eqref{eqn:A2} as
\[
\mathscr{I}_1(\theta)
=-\frac{3k_2}{k_1I_1^{\ast 4}}\int_{\gamma_\theta}\tilde{h}_1(t;\theta_2)\d t,
\]
where
\begin{align*}
\tilde{h}_1(t;\theta_2)=&
 \frac{e}{I_2^{\ast 5}}(1+e\cos\varphi(t))^2\sin\varphi(t)\\
&
+\frac{2e}{I_2^{\ast 7}}(1+e\cos\varphi(t))^3\sin\varphi(t)\cos(\phi(t)+\theta_2)\notag\\
&
+\frac{1}{I_2^{\ast 7}}(1+e\cos\varphi(t))^4\sin(\phi(t)+\theta_2)\notag\\
&
 -\frac{1}{\hat{r}(t;\theta_2)^3}\biggl(e I_2^\ast(1+e\cos\varphi(t))^2\sin\varphi(t)\\
&
 +\frac{1}{I_2^\ast}(1+e\cos\varphi(t))^4\sin(\phi(t)+\theta_2)\notag\\
&
 -\frac{e}{I_2^\ast}(1+e\cos\varphi(t))^3\sin\varphi(t)\cos(\phi(t)+\theta_2)\biggr)
\end{align*}
and the closed loop $\gamma_\theta$ is specified below.

\begin{figure}[t]
\includegraphics[scale=0.5]{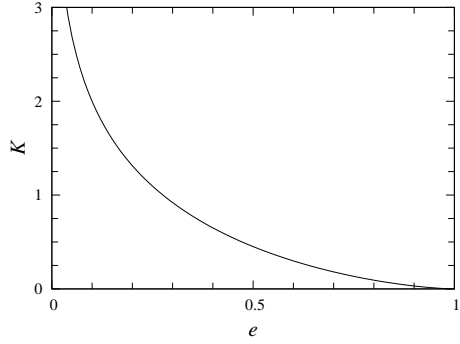}
\caption{Dependence of $K$ on $e$.
 \label{fig:3a}}
\end{figure}

We integrate \eqref{eqn:ppdphi} to obtain
\begin{equation}
\frac{k_1 t}{k_2}=2\arctan\biggl(\frac{(1-e)\tan\tfrac{1}{2}\varphi}{\sqrt{1-e^2}}\biggr)
 -\frac{e\sqrt{1-e^2}\sin\varphi}{1+e\cos\varphi}+\pi
\label{eqn:varphi}
\end{equation}
when $\varphi(k_2\pi/k_1)=0$.
As $\im\varphi\to\infty$, we have $t\to k_2(\pi+ iK)/k_1$ and
\begin{align*}
\e^{-2i\varphi}\left(t-\frac{k_2}{k_1}(\pi+iK)\right)
=-\frac{2i(1-e^2)^{3/2}}{e^2}+o(1),
\end{align*}
where
\[
K=2\arctanh\biggl(\frac{1-e}{\sqrt{1-e^2}}\biggr)-\sqrt{1-e^2}>0.
\]
See Fig.~\ref{fig:3a}.
Thus, we obtain
\begin{align*}
&
\e^{-i\varphi}
=-\frac{(1-i)(1-e^2)^{3/4}}{e(t-k_2(\pi+iK)/k_1)^{1/2}}+O(1)
\end{align*}
as $t\to\pi+iK$.
We take a small circle with radius $\delta>0$ and center at $t=\pi+iK$ as $\gamma_\theta$.
On the circle, we estimate
\begin{align*}
&
(1+e\cos\varphi(t))^3\sin\varphi(t)\cos(\varphi(t)-t+\theta_2)\\
&
=\frac{(1+i)(1-e^2)^{15/4}\e^{-(k_2K/k_1+i(\theta_2-k_2\pi/k_1))}}{8e^2\zeta^{5/2}}+o(|\zeta|^{-5/2})
\end{align*}
and
\begin{align*}
&
(1+e\cos\varphi(t))^4\sin(\varphi(t)-t+\theta_2)\\
&=\frac{(1+i)(1-e^2)^{15/4}\e^{-(k_2K/k_1+i(\theta_2-k_2\pi/k_1))}}{8e\zeta^{5/2}}+o(|\zeta|^{-5/2})
\end{align*}
where $\zeta+iK\in\gamma_\theta$.
Noting that the other terms in the integrand are $o(|\zeta|^{-5/2})$ on $\gamma_\theta$,
 we compute
\begin{align*}
\mathscr{I}_1(\theta)
=& -\frac{9(1-i)k_1^2(1-e^2)^{1/4}}{4ek_2^2\delta^{3/2}}
 \e^{-(k_2K/k_1+i(\theta_2-k_2\pi/k_1))}+o(\delta^{-3/2}),
\end{align*}
so that condition~(A2) holds.
Here we have used  the relations $I_1^\ast=(k_2/k_1)^{1/3}$ and $I_2^\ast=(k_2/k_1)^{1/3}\sqrt{1-e^2}$.
Using Theorem~\ref{thm:tool},
 we obtain the desired result.
\end{proof}

\begin{rmk}
\label{rmk:3a}
From the above proof we see that
 the planar problem \eqref{eqn:pp} is nonintegrable near the periodic orbits
\[
r=\frac{(k_2/k_1)^{2/3}(1-e^2)}{1+e\cos(\varphi(t)+\bar{\phi})},\quad
t\in\Cset,
\]
for any $e\in(0,1)$, $\bar{\phi}\in\Sset^1$ and $(k_1,k_2)$ of relatively prime positive integers,
 where $\varphi(t)$ is implicitly given by \eqref{eqn:varphi}
 $($see the first equation of \eqref{eqn:pprchi} and Eq.~\eqref{eqn:con}$)$.
\end{rmk}

\section{Spatial Case}

We next discuss the spatial case \eqref{eqn:sp}.
Expanding \eqref{eqn:sp} in a Taylor series about $\mu=0$, we obtain
\begin{equation}
\begin{split}
&
\dot{x}=p_x+y,\quad
\dot{y}=p_y-x,\quad
\dot{z}=p_z,\\
&
\dot{p}_x=p_y-\frac{x}{(x^2+y^2+z^2)^{3/2}}\\
&\qquad
+\mu\left(\frac{(x-1)(x^2+y^2+z^2)+3x^2}{(x^2+y^2+z^2)^{5/2}}
 -\frac{x-1}{(x-1)^2+y^2+z^2)^{3/2}}\right),\\
&
\dot{p}_y=-p_x-\frac{y}{(x^2+y^2+z^2)^{3/2}}\\
&\qquad
 +\mu\left(\frac{y(x^2+y^2+z^2)+3xy}{(x^2+y^2+z^2)^{5/2}}
-\frac{y}{((x-1)^2+y^2+z^2)^{3/2}}\right),\\
&
\dot{p}_z=-\frac{z}{(x^2+y^2+z^2)^{3/2}}\\
&\qquad
 +\mu\left(\frac{y(x^2+y^2+z^2)+3zx}{(x^2+y^2+z^2)^{5/2}}
-\frac{z}{((x-1)^2+y^2+z^2)^{3/2}}\right),
\end{split}
\label{eqn:sped}
\end{equation}
up to $O(\mu)$.
Equation~\eqref{eqn:sped} is a Hamiltonian system with the Hamiltonian
\begin{align}
H=&
\tfrac{1}{2}(p_x^2+p_y^2+p_z^2)-\frac{1}{\sqrt{x^2+y^2+z^2}}+y p_x-xp_y\notag\\
&
+\mu\left(\frac{x^2+y^2+z^2+x}{(x^2+y^2)^{3/2}}-\frac{1}{\sqrt{(x-1)^2+y^2+z^2}}\right).
\label{eqn:H3}
\end{align}

\begin{figure}[t]
\includegraphics[scale=0.8]{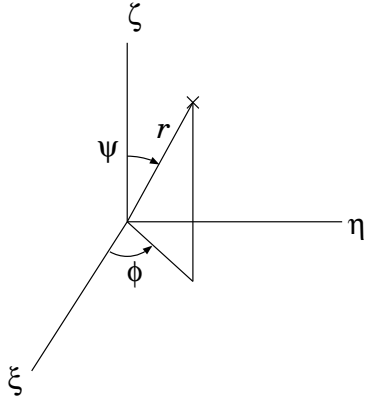}
\caption{Spherical coordinates.
 \label{fig:4a}}
\end{figure}

We next rewrite the Hamiltonian \eqref{eqn:H3} in the spherical coordinates.
Let
\[
x=r\sin\psi\cos\phi,\quad
y=r\sin\psi\sin\phi,\quad
z=r\cos\psi.
\]
See Fig.~\ref{fig:4a}.
The momenta $(p_r,p_\phi,p_\psi)$ corresponding to $(r,\phi,\psi)$ satisfy
\begin{align*}
&
p_x=p_r\cos\phi\sin\psi-\frac{p_\phi\sin\phi}{r\sin\psi}+\frac{p_\psi}{r}\cos\phi\cos\psi,\\
&
p_y=p_r\sin\phi\sin\psi+\frac{p_\phi\cos\phi}{r\sin\psi}+\frac{p_\psi}{r}\sin\phi\cos\psi,\\
&
p_z=p_r\cos\psi-\frac{p_\psi}{r}\sin\psi
\end{align*}
(see, e.g., Section~8.7 of \cite{MO17}).
The Hamiltonian becomes
\begin{align*}
H=&\tfrac{1}{2}\left(p_r^2+\frac{p_\psi^2}{r^2}+\frac{p_\phi^2}{r^2\sin^2\psi}\right)-\frac{1}{r}-p_\phi\\
& +\mu\biggl(\frac{r+\sin\psi\cos\phi}{r^2}-\frac{1}{\sqrt{r^2+1-2r\sin\psi\cos\phi}}\biggr).
\end{align*}
When $\mu=0$, the corresponding Hamiltonian system is written as
\begin{equation}
\begin{split}
&
\dot{r}=p_r,\quad
\dot{p}_r=\frac{p_\psi^2}{r^3}+\frac{p_\phi^2}{r^3\sin^2\psi}-\frac{1}{r^2},\quad
\dot{\phi}=\frac{p_\phi}{r^2\sin^2\psi}-1,\\
&
\dot{p}_\phi=0,\quad
\dot{\psi}=\frac{p_\psi}{r^2},\quad
\dot{p}_\psi=\frac{p_\phi^2\cos\psi}{r^2\sin^3\psi}.
\label{eqn:sp0p}
\end{split}
\end{equation}
We also have the relation \eqref{eqn:ppr} for periodic orbits on the $xy$-plane
 since Eq.~\eqref{eqn:sp0p} reduces to \eqref{eqn:pp0p} there
 when $\psi=\tfrac{1}{2}\pi$ and $p_\psi=0$.

As in the planar case,
 we introduce  the Delaunay elements obtained from the generating function
\begin{equation}
\hat{W}(r,\phi,\psi,I_1,I_2.I_2)=I_3\phi+\chi(r,I_1,I_2)+\hat{\chi}(\psi,I_2,I_3),
\label{eqn:hW}
\end{equation}
where
\begin{align}
\hat{\chi}(\psi,I_2,I_3)
=&\int_{\psi_0}^\psi\left(I_2^2-\frac{I_3^2}{\sin^2s}\right)^{1/2}\d s\notag\\
=& I_2\arctan\frac{\sqrt{I_2^2\sin^2\psi-I_3^2}}{I_2\cos\psi}
 -I_3\arctan\frac{\sqrt{I_2^2\sin^2\psi-I_3^2}}{I_3\cos\psi}
\label{eqn:hchi}
\end{align}
with $\psi_0=\arcsin(I_3/I_2)$.
See, e.g., Section~8.9.3 of \cite{MO17},
 although a slightly modified generating function is used here.
We have
\begin{equation}
\begin{split}
&
p_r=\frac{\partial \hat{W}}{\partial r}
=\frac{\partial\chi}{\partial r}(r,I_1,I_2),\quad
p_\phi=\frac{\partial \hat{W}}{\partial\phi}
=I_3,\\
&
p_\psi=\frac{\partial\hat{W}}{\partial\psi}
=\frac{\partial\hat{\chi}}{\partial\psi}(\psi,I_2,I_3),\quad
\theta_1=\frac{\partial \hat{W}}{\partial I_1}
=\chi_1(r,I_1,I_2),\\
&
\theta_2=\frac{\partial\hat{W}}{\partial I_2}
=\chi_2(r,I_1,I_2)+\hat{\chi}_2(\psi,I_2,I_3),\quad
\theta_3=\frac{\partial \hat{W}}{\partial I_3}
=\phi+\hat{\chi}_3(\psi,I_2,I_3),
\end{split}
\label{eqn:spDe}
\end{equation}
where
\begin{align*}
\hat{\chi}_2(\psi,I_2,I_3)
=\frac{\partial\hat{\chi}}{\partial I_2}(\psi,I_2,I_3),\quad
\hat{\chi}_3(\psi,I_2,I_3)
=\frac{\partial\hat{\chi}}{\partial I_3}(\psi,I_2,I_3).
\end{align*}
Since the transformation from $(r,\phi,\psi,p_r,p_\phi,p_\psi)$
 to $(\theta_1,\theta_2,\theta_3,I_1,I_2,I_3)$ is symplectic,
 the transformed system is also Hamiltonian and its Hamiltonian is given by
\begin{align}
H=&-\frac{1}{2I_1^2}-I_3+\mu\biggl(\frac{1}{R(\theta_1,I_1,I_2)}
 +\frac{\sin\Psi(\theta_1,\theta_2,I_1,I_2,I_3)}{R(\theta_1,I_1,I_2)^2}\notag\\
& \times\cos(\theta_3-\hat{\chi}_3(\Psi(\theta_1,\theta_2,I_1,I_2,I_3),I_2,I_3))\notag\\
& -\bigl(R(\theta_1,I_1,I_2)^2+1-2R(\theta_1,I_1,I_2)\sin\Psi(\theta_1,\theta_2,I_1,I_2,I_3)\notag\\
& \times\cos(\theta_3-\hat{\chi}_3(\Psi(\theta_1,\theta_2,I_1,I_2,I_3),I_2,I_3))\bigr)^{-1/2}\biggr),
\label{eqn:spHD}
\end{align}
where $r=R(\theta_1,I_1,I_2)$ and $\psi=\Psi(\theta_1,\theta_2,I_1,I_2,I_3)$
 are the $r$- and $\psi$-components of the symplectic transformation
 satisfying \eqref{eqn:R} and
\[
\hat{\chi}_2(\Psi(\theta_1,\theta_2,I_1,I_2,I_3),I_2,I_3)+\chi_2(R(\theta_1,I_1,I_2),I_1,I_2))
 =\theta_2,
\]
respectively.
The associated Hamiltonian system is written as
\begin{equation}
\begin{split}
&
\dot{I}_1=\mu h_1(I_1,I_2,I_3,\theta_1,\theta_2,\theta_3),\quad
\dot{I}_2=O(\mu),\quad 
\dot{I}_3=O(\mu),\\
&
\dot{\theta}_1=\frac{1}{I_1^3}+O(\mu),\quad
\dot{\theta}_2=O(\mu),\quad
\dot{\theta}_3=-1+O(\mu),
\end{split}
\label{eqn:spe}
\end{equation}
where
\begin{align*}
&
h_1(I_1,I_2,I_3,\theta_1,\theta_2,\theta_3)\\
&
=\frac{\partial R}{\partial\theta_1}(\theta_1,I_1,I_2)\biggl(\frac{1}{R(\theta_1,I_1,I_2)^2}\\
&\quad
+\frac{2\sin\Psi(\theta_1,\theta_2,I_1,I_2,I_3)}{R(\theta_1,I_1,I_2)^3}
 \cos(\theta_3-\hat{\chi}_3(\Psi(\theta_1,\theta_2,I_1,I_2,I_3),I_2,I_3))\\
&\quad
-\bigl(R(\theta_1,I_1,I_2)^2+1-2R(\theta_1,I_1,I_2)\sin\Psi(\theta_1,\theta_2,I_1,I_2,I_3)\\
&\quad
\times\cos(\theta_3-\hat{\chi}_3(\Psi(\theta_1,\theta_2,I_1,I_2,I_3),I_2,I_3))\bigr)^{-3/2}
 \biggl(R(\theta_1,I_1,I_2)\\
&\quad
-\sin\Psi(\theta_1,\theta_2,I_1,I_2,I_3)
 \cos(\theta_3-\hat{\chi}_3(\Psi(\theta_1,\theta_2,I_1,I_2,I_3),I_2,I_3))\biggr)\biggr)\\
&\quad
+\frac{\partial\Psi}{\partial\theta_1}(\theta_1,\theta_2,I_1,I_2,I_3)
\biggl(\frac{\sin\Psi(\theta_1,\theta_2,I_1,I_2,I_3)}{R(\theta_1,I_1,I_2)^2}\\
&\quad
\times\sin(\theta_3-\hat{\chi}_3(\Psi(\theta_1,\theta_2,I_1,I_2,I_3),I_2,I_3))
 \frac{\partial\hat{\chi}_3}{\partial\psi}(\Psi(\theta_1,\theta_2,I_1,I_2,I_3),I_2,I_3))\\
&\quad
 -\frac{\cos\Psi(\theta_1,\theta_2,I_1,I_2,I_3)}{R(\theta_1,I_1,I_2)^2}
 \cos(\theta_3-\hat{\chi}_3(\Psi(\theta_1,\theta_2,I_1,I_2,I_3),I_2,I_3))\\
&\quad
+R(\theta_1,I_1,I_2)\bigl(R(\theta_1,I_1,I_2)^2+1-2R(\theta_1,I_1,I_2)\sin\Psi(\theta_1,\theta_2,I_1,I_2,I_3)\\
&\quad
\times\cos(\theta_3-\hat{\chi}_3(\Psi(\theta_1,\theta_2,I_1,I_2,I_3),I_2,I_3))\bigr)^{-3/2}
 \biggl(\cos\Psi(\theta_1,\theta_2,I_1,I_2,I_3)\\
&\quad
\times\cos(\theta_3-\hat{\chi}_3(\Psi(\theta_1,\theta_2,I_1,I_2,I_3),I_2,I_3))
 -\sin\Psi(\theta_1,\theta_2,I_1,I_2,I_3)\\
&\quad
\times\sin(\theta_3-\hat{\chi}_3(\Psi(\theta_1,\theta_2,I_1,I_2,I_3),I_2,I_3))
 \frac{\partial\hat{\chi}_3}{\partial\psi}(\Psi(\theta_1,\theta_2,I_1,I_2,I_3),I_2,I_3))\biggr).
\end{align*}

As in the planar case, we introduce the new variables $w_1,w_2\in(\Cset/2\pi)$ given by
\begin{align*}
&
W_1(w_1,\psi,I_2,I_3):=I_2^2\cos^2\psi\tan^2w_1-I_2^2\sin^2\psi+I_3^2=0,\\
&
W_2(w_2,\psi,I_2,I_3):=I_3^2\cos^2\psi\tan^2w_2-I_2^2\sin^2\psi+I_3^2=0
\end{align*}
are introduced,
 so that the generating function \eqref{eqn:hW} is regarded as an analytic one
 on the six--dimensional complex manifold
\begin{align*}
\bar{\S}_3=&\{(r,\phi,\psi,I_1,I_2,I_3,v_1,v_2,v_3,w_1,w_2)
\in\Cset\times(\Cset/2\pi\Zset)^2\times\Cset^4\times(\Cset/2\pi\Zset)^4\\
&\quad
\mid V_j(v_j,r,I_1,I_2)=W_l(w_l,\psi,I_2,I_3)=0,\ j=1,2,3,\ l=1,2\}
\end{align*}
since Eq.~\eqref{eqn:hchi} is represented by
\[
\hat{\chi}(I_2,I_3;v_1,v_2)=I_2w_1-I_3w_2.
\]
Moreover, we can regard \eqref{eqn:spe}
 as a meromorphic three-degree-of-freedom Hamiltonian systems
 on the six-dimensional complex manifold
\begin{align*}
\S_3=&\{(I_1,I_2,I_3,\theta_1,\theta_2,\theta_3,r,v_1,v_2,v_3,w_1,w_2)
 \in\Cset^3\times(\Cset/2\pi\Zset)^3\times\Cset^2\times(\Cset/2\pi\Zset)^4\\
&\quad
 \mid \theta_1-\chi_1(r,I_1,I_2)=V_j(v_j,r,I_1,I_2)=W_l(w_l,\psi,I_2,I_3)=0,\\
&\qquad
  j=1,2,3,\ l=1,2\}.
\end{align*}
Actually, we have
\begin{align*}
\frac{\partial W_j}{\partial w_j}\frac{\partial w_j}{\partial\psi}
 +\frac{\partial W_j}{\partial\psi}=0,\quad
\frac{\partial W_j}{\partial w_j}\frac{\partial w_j}{\partial I_l}
 +\frac{\partial W_j}{\partial I_l}=0,\quad
 j=1,2,\ l=2,3
\end{align*}
to express
\begin{align*}
\frac{\partial\hat{\chi}}{\partial\psi}
 =\sum_{j=1}^2\frac{\partial\hat{\chi}}{\partial w_j}\frac{\partial W_j}{\partial\psi},\quad
\hat{\chi}_l=\frac{\partial\hat{\chi}}{\partial I_l}
 +\sum_{j=1}^2\frac{\partial\hat\chi}{\partial w_j}\frac{\partial W_j}{\partial I_l},\quad
 l=2,3
\end{align*}
as meromorophic functions of $(\psi,I_2,I_3,w_1,w_2)$ on $\bar{\S}_3$.
Let $\Sigma(\S_3)$ is the critical set of $\S_3$
 on which the projection $\pi_3:\hat{\S}_3\to\Cset^3\times(\Cset/2\pi\Zset)^3$
 given by
\[
\pi_3(I_1,I_2,I_3,\theta_1,\theta_2,\theta_3,r,v_1,v_2,v_3,w_1,w_2)
 =(I_1,I_2,I_3,\theta_1,\theta_2,\theta_3)
\]
is singular.

\begin{thm}
\label{thm:sp}
The Hamiltonian system \eqref{eqn:ppe} with the Hamiltonian \eqref{eqn:ppHD}
 does not have a complete set of first integrals in involution
 that are functionally independent almost everywhere
 and meromorphic in $(I_1,I_2,I_3,\theta_1,\theta_2,\theta_3,v_1,v_2,v_3,w_1,w_2,\mu)$
 in a neighborhood of the unperturbed orbit $\{I_1,I_2,I_3=\text{const.}\}$
 with $I_2=I_3\in(0,I_1)$ or $(I_1,0)$ and $I_1^3\in\Qset\setminus\{0\}$
 on $\S_3\setminus\Sigma(\S_3)$ near $\mu=0$.
\end{thm}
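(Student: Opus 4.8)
Following the proof of Theorem~\ref{thm:pp} and Theorem~2 of \cite{C13}, it will suffice to verify the
hypotheses of Theorem~\ref{thm:tool} for the system \eqref{eqn:spe} on $\S_3\setminus\Sigma(\S_3)$, where the
associated Hamiltonian system has the same expression. The plan is to reduce everything to the invariant
submanifold on which the third body moves in the $xy$-plane. In the spherical coordinates this submanifold is
$\{\psi=\tfrac{1}{2}\pi,\ p_\psi=0\}$, which is invariant because $U_3$ is even in $z$, and in the Delaunay
elements \eqref{eqn:spDe} it is $\{I_2=I_3\}$: there $\psi_0=\arcsin(I_3/I_2)=\tfrac{1}{2}\pi$,
\eqref{eqn:sp0p} reduces to \eqref{eqn:pp0p}, and the unperturbed motion is the elliptic orbit
\eqref{eqn:ppr} with $\varphi=\phi+t$. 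I would fix $I_1=I_1^\ast$ with $I_1^{\ast 3}=k_2/k_1$ for relatively
prime $k_1,k_2>0$ (the case $\omega_1<0$ being handled by an obvious modification, as in Section~3), take the
unperturbed orbit to be a resonant periodic orbit of \eqref{eqn:pp0p} lying in this plane, and use the period
condition \eqref{eqn:con} to obtain $I_2=I_3=I_1^\ast\sqrt{1-e^2}=:I_2^\ast$ for the corresponding $e\in(0,1)$.
Since then $\dot\theta_2=0$ and $\psi(t)\equiv\tfrac{1}{2}\pi$ along the orbit, one has
$\Psi\equiv\tfrac{1}{2}\pi$, hence $\sin\Psi=1$, $\cos\Psi=0$, $\partial\Psi/\partial\theta_1=0$ (by
differentiating the relation defining $\Psi$ with respect to $\theta_1$), and $\theta_3-\hat{\chi}_3(\Psi,I_2,I_3)$
equal to the azimuthal angle $\phi$, just as $\theta_2-\chi_2(R,I_1,I_2)$ is in the planar case. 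The unperturbed
frequency vector becomes $\omega(I^\ast)=(1/I_1^{\ast 3},0,-1)=(k_1/k_2,0,-1)\neq0$, so
$\omega(I^\ast)/\omega^\ast\in\Zset^3\setminus\{0\}$ with $\omega^\ast=1/k_2$ and assumption~(A1) holds with a
resonance of multiplicity $n-1=2$; moreover
\[
\D\omega(I^\ast)=\begin{pmatrix}-3/I_1^{\ast 4}&0&0\\0&0&0\\0&0&0\end{pmatrix},
\]
so only the first component $\mathscr{I}_1(\theta)$ of $\mathscr{I}(\theta)$ in \eqref{eqn:A2} can be nonzero.

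The heart of the argument is to evaluate $h_1$ along this orbit. Of the two parts of the formula for the spatial
$h_1$, the one carrying the factor $\partial R/\partial\theta_1$ collapses, using $\sin\Psi=1$, to
\[
\frac{\partial R}{\partial\theta_1}(\theta_1,I_1,I_2)
\left(\frac{1}{R^2}+\frac{2\cos\phi}{R^3}
 -\bigl(R^2+1-2R\cos\phi\bigr)^{-3/2}\bigl(R-\cos\phi\bigr)\right),
\]
while the one carrying the factor $\partial\Psi/\partial\theta_1$ is an indeterminate form $0\cdot\infty$ at
$\psi=\tfrac{1}{2}\pi$, $I_2=I_3$, because both $\partial\hat{\chi}_3/\partial\psi$ and
$\partial\hat{\chi}_2/\partial\psi$ blow up there; since their ratio tends to $-1$, this part reproduces in the
limit exactly the $\partial\chi_2/\partial r$ terms of the planar $h_1$. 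I would carry out this computation on
$\S_3$, where $h_1$, $\hat{\chi}_3$ and $\partial\hat{\chi}_3/\partial\psi$ are meromorphic in
$(\psi,I_2,I_3,w_1,w_2)$ by construction, so that restriction to the lifted planar orbit is merely the evaluation
of a meromorphic function and the $0\cdot\infty$ cancellation takes care of itself; the upshot is that, as a
function of $t$ and the phase $\theta_3$, the spatial $h_1$ coincides with the planar $h_1$ of Section~3 with
$\theta_2$ replaced by $\theta_3$. Then, taking $\gamma_\theta$ to be the small circle of radius $\delta>0$
centered at $t=\pi+iK$ with $K=2\arctanh\bigl((1-e)/\sqrt{1-e^2}\bigr)-\sqrt{1-e^2}>0$, the same analysis of the
singularity at $t=\pi+iK$ as in the proof of Theorem~\ref{thm:pp} gives
\[
\mathscr{I}_1(\theta)
=-\frac{9(1-i)k_1^2(1-e^2)^{1/4}}{4ek_2^2\delta^{3/2}}\,
 \e^{-(k_2K/k_1+i(\theta_3-k_2\pi/k_1))}+o(\delta^{-3/2})\neq0,
\]
so that assumption~(A2) holds, in fact for every $\theta\in\Tset^3$, and Theorem~\ref{thm:tool} yields the
nonexistence of the required first integrals in a neighbourhood of the resonant torus
$\{I_1,I_2,I_3=\text{const.}\}$.

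I expect the only genuine obstacle to be this reduction of the spatial $h_1$ to the planar one: one must check
that on the invariant plane the terms carrying $\cos\Psi$ and $\partial\Psi/\partial\theta_1$ either vanish or,
through the $0\cdot\infty$ behaviour described above, recombine into the planar expression, and that this is an
identity of meromorphic functions on $\S_3\setminus\Sigma(\S_3)$ rather than merely an equality on the real
locus---which is precisely why the auxiliary variables $w_1,w_2$ and the manifold $\S_3$ are introduced. Once
this bookkeeping is completed, the verification of (A1) and (A2) and the appeal to Theorem~\ref{thm:tool} are
word for word those of the planar case.
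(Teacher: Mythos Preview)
Your proposal is correct and follows essentially the same route as the paper: restrict to the invariant planar locus $I_2=I_3$ (equivalently $\psi=\tfrac12\pi$, $p_\psi=0$), show that along the resonant orbit the spatial $h_1$ coincides with the planar $h_1$ of Section~3 with $\theta_2$ replaced by $\theta_3$, and then invoke the planar computation of $\mathscr{I}_1(\theta)$ verbatim to verify (A1)--(A2) and apply Theorem~\ref{thm:tool}. The only cosmetic difference is that the paper obtains the product $\dfrac{\partial\hat{\chi}_3}{\partial\psi}\,\dfrac{\partial\Psi}{\partial\theta_1}$ directly by differentiating the solution identity \eqref{eqn:sprchi} in $t$ (giving \eqref{eqn:sprchi2}), rather than resolving the $0\cdot\infty$ form via the limiting ratio $\partial\hat{\chi}_3/\partial\psi:\partial\hat{\chi}_2/\partial\psi\to-1$ as you do; both computations are equivalent and yield exactly the planar expression \eqref{eqn:ph1}.
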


\begin{proof}
We only have to show that
 the hypotheses of Theorem~\ref{thm:tool} hold for \eqref{eqn:spe} as in the proof of Theorem~\ref{thm:pp}.

We first estimate $h_1(I,\theta)$ for the unperturbed solutions to \eqref{eqn:spe}.
When $\mu=0$, we see that $I_1,I_2,I_3,\theta_2$ are constants
 and can write $\theta_1=\omega_1 t+\theta_{10}$ and $\theta_3=-t+\theta_{30}$
 for any solution to \eqref{eqn:spe} with \eqref{eqn:omega1},
 where $\theta_{10},\theta_{30}\in\Sset^1$ are constants.
Note that if $\psi=\tfrac{1}{2}\pi$ and $p_\psi=0$, then $I_2=I_3$ by \eqref{eqn:spDe}.
Since $r=R(\omega_1 t+\theta_{10},I_1,I_2)$ and
\[
\phi=-t-\hat{\chi}_3(\Psi(\omega_1 t+\theta_{10},\theta_2,I_1,I_2),I_2,I_2),\quad
\Psi(\omega_1 t+\theta_{10},\theta_2,I_1,I_2)=\tfrac{1}{2}\pi,
\]
respectively, become the $r$- and $\phi$-components of a solution to \eqref{eqn:sp0p}
 with $\psi=\tfrac{1}{2}\pi$ and $p_\psi=0$, we have the first equation of \eqref{eqn:pprchi} with
\begin{equation}
-\hat{\chi}_3(\Psi(\omega_1 t+\theta_{10},
 \theta_2,I_1,I_2,I_2),I_2,I_2)
 =\phi(t)+t+\bar{\phi}(\theta_{10}),
\label{eqn:sprchi}
\end{equation}
where $\phi(t)$ is the $\phi$-component of a solution to \eqref{eqn:pp0p}
 and $\bar{\phi}(\theta_1)$ is a constant depending only on $\theta_1$ as in the planar case.
Differentiating \eqref{eqn:sprchi} with respect to $t$ yields
\begin{align}
&
-\omega_1\frac{\partial \hat{\chi}_3}{\partial\psi}
 (\Psi(\omega_1 t+\theta_{10},
 \theta_2,I_1,I_2,I_2),I_2,I_2)\notag\\
&\qquad
\times\frac{\partial\Psi}{\partial\theta_1}(\omega_1 t+\theta_{10},
 \theta_2,I_1,I_2,I_2)=\dot{\phi}(t)+1.
\label{eqn:sprchi2}
\end{align}

Fix $I_1=I_1^\ast$ such that $\omega_1=k_1/k_2$ for $k_1,k_2>0$ relatively prime integers,
 i.e., $I_1^{\ast 3}=k_2/k_1\in\Qset$.
The following arguments can be modified to apply when $\omega_1<0$, as in Section~3.
Assumption~(A1) holds with $\omega^\ast=1/k_2$ as in Section~3.
By \eqref{eqn:sprchi} $\phi(t)$ is $2\pi k_2/k_1$-periodic, so that by \eqref{eqn:T}
\[
I_2=I_3=I_1^\ast\sqrt{1-\epsilon^2}\ (=:I_2^\ast).
\]
From \eqref{eqn:omega1} we have
\[
\D\omega(I^\ast)=
\begin{pmatrix}
-3/I_1^{\ast 4} & 0 & 0\\
0 & 0 & 0\\
0 & 0 & 0
\end{pmatrix},
\]
where $I=I^\ast:=(I_1^\ast,I_2^\ast,I_2^\ast)$.
Using the first equations of \eqref{eqn:pprchi} and \eqref{eqn:pprchi2},
 \eqref{eqn:sprchi} and \eqref{eqn:sprchi2}, we obtain
\begin{align}
&
h_1(I^\ast,\tau+\theta_1,\theta_2,-\tau+\theta_3)\d\tau\notag\\
&
=\frac{k_2\dot{\varphi}(t)}{k_1}\biggl(\frac{e}{I_2^{\ast 2}}\sin\varphi(t)
 +\frac{2e}{I_2^{\ast 4}}(1+e\cos\varphi(t))\sin\varphi(t)\cos(\phi(t)+\theta_3)\notag\\
&\quad
+\frac{1}{I_2^{\ast 4}}(1+e\cos\varphi(t))^2\sin(\phi(t)+\theta_3)\notag\\
&\quad
 -\frac{1}{\hat{r}(t;\theta_3)^3}\biggl(e I_2^{\ast 4}\sin\varphi(t)
 +I_2^{\ast 2}(1+e\cos\varphi(t))^2\sin(\phi(t)+\theta_3\notag\\
&\quad
 -e I_2^{\ast 2}(1+e\cos\varphi(t))\sin\varphi(t)\cos(\phi(t)+\theta_3)\biggr)
\label{eqn:sh1}
\end{align}
after an appropriate shift of the time variable $t$,
 where 
 $\hat{r}(t;\theta_3)$ is given by \eqref{eqn:hatr} with $\theta_2=\theta_3$.
Equation \eqref{eqn:sh1}
 has the same expression as \eqref{eqn:ph1} with $\theta_2=\theta_3$.
Repeating the arguments in the proof of Theorem~\ref{thm:pp},
 we can show that assumption~(A2) holds as in the planar case.
Thus, we use Theorem~\ref{thm:tool} to complete the proof.
\end{proof}

\begin{rmk}
\label{rmk:4a}
From the above proof we see that
 the planar problem \eqref{eqn:sp} is nonintegrable near the periodic orbits
\[
r=\frac{(k_2/k_1)^{2/3}(1-e^2)}{1+e\cos(\varphi(t)+\bar{\phi})},\quad
t\in\Cset,
\]
on the $xy$-plane,
for any $e\in(0,1)$, $\bar{\phi}\in\Sset^1$ and $(k_1,k_2)$ of relatively prime positive integers,
 where $\varphi(t)$ is implicitly given by \eqref{eqn:varphi},
 as in Remark~$\ref{rmk:3a}$.
\end{rmk}

\section*{Data Availability}
Data sharing not applicable to this article as no dataset was generated or analyzed
 during the current study.


%


\end{document}